\title{An Operator-Valued Haagerup Inequality for Hyperbolic Groups}
\author{Ryo Toyota}
\address{Ryo Toyota: Department of Mathematics, Texas A\&M University, TX, USA}
\email{ryo-toyota@tamu.edu}
\author{Zhiyuan Yang}
\address{Zhiyuan Yang: Department of Mathematics, Texas A\&M University, TX, USA}
\email{zhiyuanyang@tamu.edu}
\begin{document}

\newtheorem{thm}{Theorem}
\newtheorem{dfn}[thm]{Definition}
\newtheorem{lem}[thm]{Lemma}
\newtheorem{cor}[thm]{Corollary}
\newtheorem{rmk}[thm]{Remark}
\newtheorem*{pf}{Proof}
\newtheorem{ex}{Exercise}
\newtheorem{prop}[thm]{Proposition}
\newtheorem*{claim}{Claim}

\newcommand{\CC}{ {\mathbb C} }
\newcommand{\HH}{{\mathcal H}}
\newcommand{\FF}{{\mathbb F}}

\maketitle

\begin{abstract}
    We study an operator-valued generalization of the Haagerup inequality for Gromov hyperbolic groups. In 1978, U. Haagerup showed that if $f \in \CC[\FF_r]$ is supported on the $k$-sphere $S_k=\{x\in \FF_r:\ell(x)=k\}$, then we have
    $\left \| \sum_{x \in S_k} f(x)\lambda(x)\right \|_{B(\ell^2(\FF_r))} \leq (k+1)\|f\|_2.$
   An operator-valued generalization of it was initiated by U. Haagerup and G. Pisier. One of the most complete form was given by A. Buchholz, where the $\ell^2$-norm in the original inequality was replaced by $k+1$ different matrix norms associated to word decompositions (this type of inequality is also called Khintchine-type inequality). We provide a generalization of Buchholz's result for hyperbolic groups.
\end{abstract}

\section{Introduction}
In this paper, we study an operator-valued generalization of the Haagerup inequality for Gromov hyperbolic groups. For a given finitely generated group $G$, we denote the left regular representation of its group algebra $\CC[G]$ to $\ell^2(G)$ by $\lambda$. In Lemma 1.4 of \cite{haagerup1978example}, Haagerup showed that for free groups, the operator norm of the left regular representation, which is difficult to compute in general is dominated by a certain $\ell^2$-norm $\|\cdot\|_2$.

\begin{lem}[\cite{haagerup1978example} Lemma 1.4]
\rm{
    Let $\FF_r$ be the free group with $r$-generators with the canonical length function $\ell$. If $f$ is a complex-valued function on $\FF_r$ supported on the $k$-sphere $S_k:=\{x \in \FF_r: \ell(x)=k\}$, then we have
    \begin{equation}\label{Haagerup}
        \left \| \sum_{x \in S_k} f(x)\lambda(x)\right \| \leq (k+1)\|f\|_2.
    \end{equation}
    }
\end{lem}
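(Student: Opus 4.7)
My approach is the classical one due to Haagerup. The key observation is that for $x \in S_k$ and $y \in S_n$, the reduced product $xy$ lies in $S_{n+k-2i}$ for a unique number $i \in \{0, 1, \ldots, \min(k,n)\}$ of cancellations. Accordingly I would decompose $\lambda(f) = \sum_{i=0}^{k} T_i$, where $T_i$ captures the contribution of pairs $(x,y)$ with exactly $i$ cancellations. With respect to the sphere grading $\ell^2(\FF_r) = \bigoplus_{n \geq 0} \ell^2(S_n)$, each $T_i$ is block-diagonal, mapping $\ell^2(S_n)$ into $\ell^2(S_{n+k-2i})$ for $n \geq i$, so
$$\|T_i\| = \sup_{n \geq i} \|p_{n+k-2i}\,\lambda(f)\,p_n\|,$$
where $p_n$ denotes the orthogonal projection onto $\ell^2(S_n)$.

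The heart of the matter is the block estimate $\|p_m\lambda(f)p_n\| \leq \|f\|_2$ whenever $m = n+k-2i$. Using the unique factorizations $x = ab$ and $y = b^{-1}c$ with $a \in S_{k-i}$, $b \in S_i$, $c \in S_{n-i}$ (so that $xy = ac \in S_m$), I obtain, for $g \in \ell^2(S_n)$ and $h \in \ell^2(S_m)$,
$$\langle p_m\,\lambda(f)\,p_n\,g,\, h \rangle = \sum_{(a,b,c)} f(ab)\, g(b^{-1}c)\, \overline{h(ac)},$$
where the sum runs over triples with $a \in S_{k-i}$, $b \in S_i$, $c \in S_{n-i}$ and $ab$, $b^{-1}c$, $ac$ all reduced. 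I then apply Cauchy--Schwarz twice -- first in the variable $b$ to separate $f(ab)$ from $g(b^{-1}c)$, then in the pair $(a,c)$ to peel off $h(ac)$ -- and use the bijections $S_k \leftrightarrow \{(a,b) \in S_{k-i}\times S_i : ab \text{ reduced}\}$ (and their analogues for $S_n, S_m$) to recognize the resulting sums as $\|f\|_2^2$, $\|g\|_2^2$, and $\|h\|_2^2$. This yields $|\langle p_m \lambda(f) p_n g, h\rangle| \leq \|f\|_2 \|g\|_2 \|h\|_2$ and hence the block estimate.

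Combining everything by the triangle inequality, $\|\lambda(f)\| \leq \sum_{i=0}^k \|T_i\| \leq (k+1)\|f\|_2$. The main technical point is the double Cauchy--Schwarz step: one has to verify that dropping the reducedness constraints on $(a,b)$, $(b,c)$, $(a,c)$ in passing from the restricted sums to the full $\ell^2$ norms is legitimate, which it is because dropping a constraint only enlarges a sum of non-negative terms. Everything else is straightforward accounting of the combinatorics of reduced words in $\FF_r$, and this combinatorial bookkeeping is the step I expect to generalize least cleanly to the hyperbolic setting treated in the rest of the paper.
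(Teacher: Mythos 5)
Your argument is correct and is precisely Haagerup's original proof: the decomposition of $\lambda(f)$ into blocks $p_{n+k-2i}\,\lambda(f)\,p_n$ indexed by the number $i$ of cancellations, the unique factorizations $x=ab$, $y=b^{-1}c$, $xy=ac$ with $a\in S_{k-i}$, $b\in S_i$, $c\in S_{n-i}$, and the double Cauchy--Schwarz (first in $b$, then in $(a,c)$, using that dropping the reducedness constraints only enlarges sums of non-negative terms) all go through exactly as you describe. The paper cites this lemma without reproving it, but your scheme is the same one it generalizes in Lemma \ref{H-type inequality} and the proof of Theorem \ref{main theorem} --- the blocks become $(P_m\otimes 1)(\lambda\otimes 1)(f)(P_n\otimes 1)$, the geodesic-triangle factorization is replaced by the $\delta$-hyperbolic four-point inequality, and the bound $\|f\|_2$ is replaced by the matrix norms $\|M_{i,j}(f)\|$, which recover $\|f\|_2$ in the scalar case via the Hilbert--Schmidt domination --- so this is essentially the same approach.
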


This inequality implies the rapid decay property of free groups, namely for all $f \in \CC[G]$, we have $\|\lambda(f)\|\leq 2 \left ( \sum_{x \in F_r}|f(x)|^2(1+\ell(x))^4 \right )^{\frac{1}{2}}$. The same inequality also holds for hyperbolic groups up to a constant factor. (See Proposition 3.3 and Proposition 4.3 of \cite{ozawa2005hyperbolic}.)
We investigate the case where a function $f$ on $G$ takes operator values, namely we consider the tensor product $\CC[G]\otimes B(\HH)$, where $\HH$ is a Hilbert space and $B(\HH)$ is the set of all bounded linear operators on $\HH$. This direction of generalization was first initiated by Haagerup and Pisier in \cite{haagerup1993bounded} and they showed the following inequality.
\begin{prop}[\cite{haagerup1993bounded} Proposition 1.1]
\rm{
    If $f\in \CC[\FF_r]\otimes B(\HH)$ is supported on $S_1$, then we have
    \begin{align*}
        \left \| \sum_{x \in S_1} \lambda(x)\otimes f(x) \right \|_{B(\ell^2(G)\otimes \HH)} \leq 2 \max \left\{ \left\| \sum_{x\in S_1} f(x)^*f(x) \right\|_{B(\HH)}^{\frac{1}{2}},\left\|\sum_{x \in S_1}f(x)f(x)^* \right \|_{B(\HH)}^{\frac{1}{2}} \right\}.
    \end{align*}
    }
\end{prop}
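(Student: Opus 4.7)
The plan is to mimic Haagerup's creation/annihilation decomposition in the operator-valued setting. I would decompose $\ell^2(\FF_r) = \bigoplus_{k\ge 0} H_k$ by word length, where $H_k = \mathrm{span}\{\delta_x : \ell(x)=k\}$, and split $T:=\sum_{x\in S_1}\lambda(x)\otimes f(x) = T_+ + T_-$ according to whether left-multiplication by $x\in S_1$ lengthens or shortens the underlying reduced word. Since $T_\pm$ maps $H_k\otimes\HH$ into $H_{k\pm 1}\otimes\HH$ (with $H_{-1}:=0$), the triangle inequality reduces the problem to bounding $\|T_+\|$ and $\|T_-\|$ separately.

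The creation piece $T_+$ is straightforward: each length-$(k+1)$ word $z$ has a unique reduced factorization $z = a\cdot y$ with $a\in S_1$ and $\ell(y)=k$, so for $\xi = \sum_y \delta_y\otimes \xi_y$ the $\delta_z$-coordinate of $T_+\xi$ is the single term $f(a)\xi_y$. Expanding $\|T_+\xi\|^2$ by Pythagoras and using $\sum_{a\in S_1} f(a)^*f(a) \ge \sum_{a\ne y_1^{-1}} f(a)^*f(a)$ yields $\|T_+\|\le\|\sum_{x\in S_1}f(x)^*f(x)\|^{1/2}$.

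The annihilation piece $T_-$ is the delicate one: at each length-$(k-1)$ word $w$, the coordinate $(T_-\xi)_w = \sum_{a\in S_1,\,a\ne w_1} f(a)\,\xi_{a^{-1}w}$ is a sum of up to $2r$ terms rather than a single one. I would apply the row-operator Cauchy--Schwarz bound $\|\sum_i A_i\eta_i\|^2 \le \|\sum_i A_iA_i^*\|\sum_i\|\eta_i\|^2$ fiberwise, then sum over $w$ using that $(w,a)\mapsto y=a^{-1}w$ is a bijection from the indexing set onto the length-$k$ words. This produces $\|T_-\|\le\|\sum_{x\in S_1}f(x)f(x)^*\|^{1/2}$, and adding to the $T_+$ estimate gives the factor $2\max\{\cdot,\cdot\}$ on the right-hand side.

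The essential subtlety I expect is the column/row asymmetry between the two pieces: $T_+$ on each fiber $\xi_y$ acts as a single column $(f(a))_{a\in S_1}$ applied to one vector, whose norm is controlled by the $f^*f$ quantity, whereas $T_-$ aggregates many preimages into each fiber $w$ and therefore must be controlled by the $ff^*$ row norm. This column/row dichotomy is exactly what forces the two summands inside the maximum, and it is the conceptual heart of the argument rather than a mere bookkeeping step.
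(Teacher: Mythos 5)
Your proof is correct, and it is essentially the standard Haagerup--Pisier argument: split $\lambda(x)$ into the length-increasing and length-decreasing parts, bound the creation piece by the column norm $\|\sum_x f(x)^*f(x)\|^{1/2}$ and the annihilation piece by the row norm $\|\sum_x f(x)f(x)^*\|^{1/2}$, and add. Note that the paper does not prove this proposition at all --- it only quotes it from the cited reference --- so there is nothing in the paper to compare against; your argument coincides with the original one.
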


The generalization for a function supported on the $k$-sphere $S_k$ for a general positive integer $k$ was studied by Buchholz in \cite{buchholz1999norm}. He replaced the term $(k+1)\|f\|_2$ of \eqref{Haagerup} by the sum of $k+1$ different matrix norms. In order to state his inequality we introduce the following notations.

\begin{dfn}
\rm{
    Let $G$ be a finitely generated group with a symmetric word length $\ell$ (namely $
    \ell(x) = \ell(x^{-1})$). For each positive integer $k$, let $\CC[G]_k$ be the set of all scalar valued functions supported on the $k$-sphere $S_k := \{ g\in G: \ell(g)= k \}$, and let also $ \CC[G]_{\leq k} $ be the set of all scalar valued function supported on the $k$-ball $B_k:= \{ g\in G: \ell(g)\leq k \}$. For $f \in \CC[G]_{\leq k}\otimes B(\HH)$, and two integers $i,j\geq 0$,
    we define a $B(\HH)$-entries $S_i\times S_{j}$-matrix $M_{i,j}(f)$ by
    \begin{align*}
        M_{i,j}(f):=(f(y_1y_2^{-1}))_{y_1 \in S_{i},y_2\in S_{j}}:\HH^{S_{j}} \rightarrow \HH^{S_{i}}.
    \end{align*}
    }
    Equivalently, we can also consider $ M_{i,j}(f) $ as a restriction of $(\lambda\otimes 1)(f)\in B(\ell^2(G)\otimes \HH)$ from $\ell^2(S_j)\otimes \HH$ to $\ell^2(S_i)\otimes \HH$.
\end{dfn}

For free group, Buchholz \cite{buchholz1999norm} proved the following inequality (see Theorem 9.7.4 of \cite{pisier2003introduction} for another reference).
\begin{thm}[\cite{buchholz1999norm} Theorem 2.8 and its proof]
\rm{
    Let $G = \mathbb{F}_m$ be a finitely generated free group and fix a positive integer $k$. For any $f \in \CC[G]_{ k}\otimes B(\HH)$, we have
    \begin{equation}\label{free strong Haagerup}
        \|(\lambda\otimes 1)(f)\|_{B(\ell^2(G))\otimes \HH}\leq \sum_{j=0}^k \|M_{j,k-j}(f)\| \leq (k+1)\max_{j=0,1,\cdots ,k}\|M_{j,k-j}(f)\|.
    \end{equation}
    }
\end{thm}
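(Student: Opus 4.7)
The plan is to exploit the tree structure of the Cayley graph of $\FF_r$ and split $\lambda(f)\otimes 1$ into $k+1$ pieces indexed by the amount of cancellation that occurs when a length-$k$ word from the support of $f$ is concatenated with a tail vector. Writing $P_n$ for the orthogonal projection onto $\ell^2(S_n)\otimes\HH$, define
\begin{equation*}
L_j \;:=\; \sum_{m\ge j} P_{m+k-2j}\,(\lambda(f)\otimes 1)\,P_m, \qquad 0\le j\le k.
\end{equation*}
Since for $x\in S_k$ and $z\in S_m$ the length $\ell(xz) = m+k-2c$ depends only on the cancellation $c\in\{0,\ldots,\min(k,m)\}$, we get $\lambda(f)\otimes 1=\sum_{j=0}^k L_j$. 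By the triangle inequality, the first inequality in \eqref{free strong Haagerup} will follow once we show $\|L_j\|\le\|M_{k-j,j}(f)\|$ for each $j$.

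For the core estimate, fix $j$ and $m\ge j$. Absence of cycles in $\FF_r$ gives a unique reduced factorization of each $z\in S_m$ as $z=u_2 v$ with $u_2\in S_j$ and $v\in S_{m-j}$, and of each $w\in S_{m+k-2j}$ as $w=u_1 v$ with $u_1\in S_{k-j}$ and $v\in S_{m-j}$. Unwinding the convolution and using that $xz=w$ with $\ell(w)=m+k-2j$ forces the shared tail $v$ and the identification $x=u_1 u_2^{-1}$, one obtains
\begin{equation*}
\bigl((\lambda(f)\otimes 1)\xi\bigr)(u_1 v) \;=\; \sum_{u_2\in S_j} f(u_1 u_2^{-1})\,\xi(u_2 v)
\end{equation*}
whenever $u_1 v$ is reduced; any non-reduced boundary contribution is automatically killed by the supports of $f$ and $\xi$. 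Hence for each fixed $v\in S_{m-j}$, the \emph{$v$-slice} $u_1\mapsto\bigl(L_j\xi\bigr)(u_1 v)$ is obtained by applying $M_{k-j,j}(f)$ to the $v$-slice $u_2\mapsto\xi(u_2 v)$ (restricted to reduced indices). Bounding each slice by $\|M_{k-j,j}(f)\|$ and summing squared norms over $v\in S_{m-j}$---which by unique factorization recollects exactly to $\|P_m\xi\|^2$---gives $\|P_{m+k-2j}(\lambda(f)\otimes 1)P_m\xi\|\le\|M_{k-j,j}(f)\|\,\|P_m\xi\|$. Since the target spheres $S_{m+k-2j}$ for different $m$ are disjoint, an outer Pythagoras in $m$ upgrades this to $\|L_j\|\le\|M_{k-j,j}(f)\|$. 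Summing in $j$ and re-indexing $j\mapsto k-j$ then produces the first inequality of \eqref{free strong Haagerup}; the second is immediate.

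The main obstacle is the unique-factorization step: it crucially uses that $\FF_r$ is a tree, so that the ``tail'' $v$ is unambiguously defined for elements of both $S_m$ and $S_{m+k-2j}$ and is forced to agree whenever $xz=w$. In a general hyperbolic group, geodesics are not unique and tails are only defined up to bounded error, which is precisely the technical difficulty the rest of this paper must overcome.
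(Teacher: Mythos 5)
Your proof is correct: the decomposition by cancellation amount $j$, the block-diagonalization over the common tail $v$ (valid by unique reduced factorization in a tree), and the identification of each block with a compression of $M_{k-j,j}(f)$ together give $\|L_j\|\le\|M_{k-j,j}(f)\|$, and the theorem follows. The paper does not reprove this cited result of Buchholz, but your argument is the standard one and is precisely the exact (``$\delta=0$'') special case of the strategy the paper uses for Lemma \ref{H-type inequality}, where the shared tail is only determined up to a hyperbolicity error controlled by $\#B_{1+2\delta}$ and the extra summation over $s\le\delta$.
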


In particular, if $G = \FF_m$, $\HH = \CC$, and $f \in \CC[G]_{ k}$, then since the operator norm $\|M_{j,k-j}(f)\|$ is dominated by its Hilbert-Schmidt norm 
\begin{align*}
    \|M_{j,k-j}(f)\|_{HS}=\left(\sum_{y\in S_k} |f(y)|^2\right)^{1/2} =\|f\|_2, 
\end{align*}
the above result is stronger than the original Haagerup inequality.

This type of inequality is also called Khintchine-type inequality, and we refer the reader to \cite{ricard2006khintchine} for a generalization for reduced (amalgamated) free products. Recently, it was also shown that similar operator-valued Haagerup (Khintchine-type) inequality holds for deformations of group algebras of right-angled Coxeter groups in \cite{caspers2021graph}.

We generalize the inequality \eqref{free strong Haagerup} to Gromov word hyperbolic groups and $f\in \CC[G]_{\leq k} \otimes B(\HH)$ supported on the ball instead of the sphere. First, we recall a definition of hyperbolic groups following \cite{ozawa2005hyperbolic}, which is convenient for our purpose. 

\begin{dfn}\label{hyperbolic defn}
\rm{
    Let $(X,d)$ be a metric space and $\delta\geq 0$ be a constant. We say that $(X,d)$ is $\delta$-hyperbolic if for any four points $x,y,z,w \in X$, we have
    \begin{equation}\label{square inequality}
        d(x,y)+d(z,w) \leq \max\{d(x,z)+d(y,w),d(x,w)+d(y,z)\}+\delta.
    \end{equation}
    }
\end{dfn}

\begin{dfn}
\rm{
    Let $G$ be a finitely generated group with a symmetric word length $\ell$. The right invariant distance induced by $\ell$ is denoted by $d$ (i.e. $d(x,y)=\ell(xy^{-1})$). For an integer $\delta$, we say that $G$ is $\delta$-hyperbolic if the metric space $(G,d)$ is $\delta$-hyperbolic in the sense of Definition \ref{hyperbolic defn}. We say that $G$ is hyperbolic if it is $\delta$-hyperbolic for some $\delta \geq 0$.
    }
\end{dfn}

Here we can state our main theorem. Although related results are likely known to experts, we are not aware of any reference in the literature where this operator-valued extension is explicitly formulated and proved for hyperbolic groups.

\begin{thm}\label{main theorem}
\rm{
    Assume $G$ is a $\delta$-hyperbolic group and we fix a positive integer $k$. For any $f \in \CC[G]_{\leq k}\otimes B(\HH)$, we have
    \begin{align*}
        \|(\lambda \otimes 1)(f)\|&\leq 2 \cdot \#B_{2\delta}\cdot \sum_{\substack{i,j\geq 0\\ k\leq i+j\leq k+\delta +1}}\|M_{i,j}(f)\|\\
        &\leq (\delta+2)\cdot \#B_{2\delta}\cdot(2k+\delta+3) \cdot \max_{\substack{i,j\geq 0\\k\leq i+j \leq k+\delta+1}} \|M_{i,j}(f)\|,
    \end{align*}
    where $\#B_{s}$ is the cardinality of the ball $ B_{s}:= \{x\in G: \ell(x)\leq s \} $ with $s \geq 0$.
    }
\end{thm}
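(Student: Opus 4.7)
The plan is to extend Buchholz's proof of the free-group case to the $\delta$-hyperbolic setting. In a tree, each $g \in S_k$ factors uniquely as $g = uv$ with $u \in S_j$ and $v \in S_{k-j}$, and $(\lambda\otimes 1)(f)$ can be written as a sum over $j$ of composites that factor through $M_{j,k-j}(f)$, the outer factors being partial isometries associated with the prefix--suffix maps. In a $\delta$-hyperbolic group the four-point condition \eqref{square inequality} forces two geodesics with the same endpoints to $\delta$-fellow-travel, so two geodesic prefixes of the same length can differ only by an element of $B_{1+2\delta}$; this is the geometric origin of both the multiplicity factor $\#B_{1+2\delta}$ and the overshoot allowance $k \le i+j \le k+\delta$ in the statement.

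First I would fix once and for all a canonical geodesic word representative for each $g \in G$ (for instance the shortlex-minimal one) so that for $g \in S_k$ and $0 \le j \le k$ there is a distinguished decomposition $g = u_j(g) v_j(g)$ with $\ell(u_j(g)) = j$ and $\ell(v_j(g)) = k - j$. For each admissible pair $(i,j)$ with $k \le i + j \le k + \delta$ I would then introduce a ``geodesic correspondence'' $\Phi_{i,j} \subseteq S_j \times S_i \times S_k$ consisting of triples $(u,v,g)$ for which $(u,v)$ is a near-geodesic factorisation of $g$ with the prescribed length profile, and use $\Phi_{i,j}$ to build two partial-isometry-like operators $V_{i,j}, W_{i,j}$ satisfying $\|V_{i,j}\|, \|W_{i,j}\| \le \sqrt{\#B_{1+2\delta}}$. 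The target identity is a factorisation
\[
    (\lambda\otimes 1)(f) \;=\; \sum_{(i,j):\,k \le i+j \le k+\delta} V_{i,j}^{\ast}\,\bigl(M_{j,i}(f)\otimes I\bigr)\,W_{i,j},
\]
from which the first displayed bound follows by the triangle inequality, the factor $2$ coming from a Cauchy--Schwarz symmetrisation between the $u$-side and $v$-side, exactly as in the $k=1$ Haagerup--Pisier argument. The second inequality is then purely combinatorial, using $|\{(i,j) : i,j \ge 0,\ k \le i+j \le k+\delta\}| = (\delta+1)(2k+\delta+2)/2$ and replacing each $\|M_{j,i}(f)\|$ by the maximum.

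The main obstacle is establishing the norm bound $\sqrt{\#B_{1+2\delta}}$ on $V_{i,j}$ and $W_{i,j}$. This reduces to a multiplicity estimate: for fixed $g \in S_k$ and a fixed prefix $u \in S_j$, the number of near-geodesic completions $v \in S_i$ with $(u,v,g) \in \Phi_{i,j}$ must be at most $\#B_{1+2\delta}$, and symmetrically in $v$. The $\delta$-hyperbolicity is used here through the ``centre of a thin triangle'' argument applied to $(e, u, g)$: the approximate centre lies within $\delta$ of every geodesic side of the triangle, which pins down the overlap/gap element between $u$ and $v$ up to $\#B_{1+2\delta}$ possibilities. The same geometric input forces $|i+j-k| \le \delta$, which is exactly the constraint determining the summation range.
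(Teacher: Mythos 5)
Your plan follows Buchholz's route of factoring $(\lambda\otimes 1)(f)$ through decompositions $g=uv$ of the elements $g\in S_k$ in the support of $f$, whereas the paper fixes the source and target spheres $S_n,S_m$, chooses for each \emph{target} point $x\in S_m$ a single decomposition $x=x_1x_2$ with lengths prescribed by $p=n+k-m$, lets hyperbolicity confine the middle element $u=y^{-1}x_1$ to the shells $S_{\lceil p/2\rceil},\dots,S_{\lceil p/2\rceil+\delta}$ (Lemma \ref{H-type inequality}), and then sums over the diagonals $r=m-n$, taking a supremum along each diagonal. The difference is not cosmetic: your central claimed identity $(\lambda\otimes 1)(f)=\sum_{i,j}V_{i,j}^{*}(M_{j,i}(f)\otimes I)W_{i,j}$ is precisely the step that fails in the hyperbolic setting, and the paper's final remark records essentially this obstruction (for $i+j=k$). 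The matrix $M_{j,i}(f)$ carries the entry $f(y_1y_2^{-1})$ at \emph{every} pair $(y_1,y_2)\in S_j\times S_i$, so the composite $V_{i,j}^{*}(M_{j,i}(f)\otimes I)W_{i,j}$ reproduces each coefficient $f(g)$ with multiplicity $d_{j,i}(g)=\#\{(y_1,y_2)\in S_j\times S_i: g=y_1y_2^{-1}\}$, which in a hyperbolic group is genuinely larger than $1$ and not constant in $g$. Your proposed fix --- selecting one distinguished factorisation per $g$ --- cannot be implemented by the two outer operators: $V_{i,j}$ sees only the row index $y_1$ (together with the target point) and $W_{i,j}$ only the column index $y_2$ (together with the source point), while ``$(y_1,y_2)$ is the distinguished factorisation of $y_1y_2^{-1}$'' is a joint condition on the two indices, so imposing it destroys the factorised form. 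The paper sidesteps this by making the distinguished choice depend on the target $x$ rather than on $g$: the selector $\delta_{\varphi(x_1x_2,p),(x_1,x_2)}$ depends only on the row index $x_1$ once the outer summation variable $x_2$ is frozen, so it can be absorbed into the vector $\eta_{x_2}$ before applying Cauchy--Schwarz against $\|M_{k-\lceil p/2\rceil,\lceil p/2\rceil+s}(f)\|$. Unless you can control the Schur multiplier $(y_1,y_2)\mapsto \delta_{\ell(y_1y_2^{-1}),k}/d_{j,i}(y_1y_2^{-1})$ --- which the authors explicitly say they do not know how to do --- your factorisation does not close.

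Two smaller points. First, you never decompose $(\lambda\otimes 1)(f)$ into diagonal blocks $\sum_{r}\sum_{m}(P_m\otimes 1)(\lambda\otimes 1)(f)(P_{m-r}\otimes 1)$; this is needed to pass from an operator on all of $\ell^2(G)\otimes\HH$ to sphere-indexed matrices, since along a fixed diagonal the blocks have mutually orthogonal domains and ranges and contribute a supremum rather than a sum. Second, if your identity held with $\|V_{i,j}\|,\|W_{i,j}\|\le\sqrt{\#B_{1+2\delta}}$, the triangle inequality would already give the bound \emph{without} the factor $2$; in the paper that factor is not a Cauchy--Schwarz symmetrisation but the observation that each pair $(i,j)$ with $k\le i+j\le k+\delta$ arises for at most two values of the diagonal parameter $r$ (via $\lceil (k-r)/2\rceil=k-i$). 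Your count $(\delta+1)(2k+\delta+2)/2$ of the index set is correct, so the passage from the first displayed inequality to the second is fine once the first is established.
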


In the next section, we prove the following key lemma. For scalar valued cases, this is called the Haagerup type condition in \cite{ozawa2005hyperbolic} and can be used to obtain a compact quantum metric structure (in the sense of M. Rieffel \cite{rieffel2004gromov}) on $\CC[G]$ for hyperbolic groups $G$.

\begin{lem}[Operator valued Haagerup type condition]\label{H-type inequality}
\rm{
For each positive integer $m$, the orthogonal projection onto the space $\CC[G]_{m}\subset \ell^2(G)$ is denoted by $P_m\in B(\ell^2(G))$.
    If $G$ is a $\delta$-hyperbolic group, then for any positive integers $k,m,n$ with $|m-n|\leq k$ and any $f \in \CC[G]_{\leq k}\otimes B(\HH)$, we have
    \begin{align*}
        \|(P_m\otimes 1)(\lambda \otimes 1)(f)(P_n\otimes 1)\| \leq \#B_{2\delta}\cdot\sum_{s=0}^{\delta}\|M_{k-\lfloor \frac{p}{2}\rfloor ,\lceil \frac{p}{2}\rceil+s}(f)\|,
    \end{align*}
    where $ p = n+k-m $.
    }
\end{lem}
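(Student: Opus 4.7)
The plan is to decompose the operator $M_{m,n}(f):=(P_m\otimes 1)(\lambda\otimes 1)(f)(P_n\otimes 1)$ as a sum of $\delta+1$ operator products indexed by $s\in\{0,1,\ldots,\delta\}$, each involving one of the matrices $M_{j,k-j+s}(f)$ with $j:=k-\lceil p/2\rceil$. The geometric input is that the geodesic triangle with vertices $e,\,y_1,\,y_1y_2^{-1}$ in the $\delta$-hyperbolic group $G$ is a $\delta$-thin approximation to a tripod; this makes the tree-type prefix/suffix decomposition of elements of $S_k$ almost work, with $\delta+1$ admissible suffix lengths accounting for the slack.

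First, for each $y_1\in S_m$ fix a geodesic word and let $u(y_1)\in S_j$ be its length-$j$ prefix, so that $y_1=u(y_1)\,h(y_1)$ with $h(y_1)\in S_{m-j}$. Given a pair $(y_1,y_2)\in S_m\times S_n$ with $x:=y_1y_2^{-1}\in S_k$, set $w_2:=x^{-1}u(y_1)$, so that $y_2=w_2\, h(y_1)$ and $x=u(y_1)\,w_2^{-1}$. Applying the four-point condition \eqref{square inequality} to $(e,y_1,x,u(y_1))$ and using $j\le (m+k-n)/2$ to rule out the ``far'' branch of the max yields the key length estimate
\[
\ell(w_2)=d(u(y_1),x)\in \{k-j,\,k-j+1,\,\ldots,\,k-j+\delta\},
\]
so that $s:=\ell(w_2)-(k-j)\in\{0,\ldots,\delta\}$ partitions all pairs into $\delta+1$ hyperbolic types.

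Next, define the isometry
\[
V:\ell^2(S_m)\to\ell^2(S_j)\otimes\ell^2(S_{m-j}),\qquad V\delta_{y_1}=\delta_{u(y_1)}\otimes\delta_{h(y_1)},
\]
and for each $s$ define
\[
W_s:\ell^2(S_n)\to\ell^2(S_{k-j+s})\otimes\ell^2(S_{m-j}),\qquad W_s\delta_{y_2}=\sum_{\substack{w\in S_{k-j+s},\ h\in S_{m-j}\\ wh=y_2}}\delta_{w}\otimes\delta_{h}.
\]
A basis-vector calculation, using the parameterization from the previous paragraph, gives the factorization
\[
M_{m,n}(f)=\sum_{s=0}^{\delta} V^*\bigl(M_{j,k-j+s}(f)\otimes I_{\ell^2(S_{m-j})}\bigr)W_s,
\]
and then the bound $\|M_{m,n}(f)\|\le \sum_s \|V^*\|\,\|M_{j,k-j+s}(f)\|\,\|W_s\|$.

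The main obstacle is bounding $\|W_s\|$. For each $y_2\in S_n$, the number of decompositions $y_2=wh$ of the prescribed form equals the number of $g=y_2 h^{-1}\in S_{k-j+s}$ with $d(g,y_2)=m-j$; the triangle $(e,g,y_2)$ has excess $(k-j+s)+(m-j)-n\le \delta+1$, so by slim triangles $g$ must lie in a bounded neighborhood of a fixed geodesic from $e$ to $y_2$, and a careful lattice-point count via the four-point condition bounds the multiplicity by $\#B_{1+2\delta}$. Recovering the precise first power of $\#B_{1+2\delta}$ in the statement (rather than its square root) requires a symmetric version of the factorization in which $V$ also carries a square root of the multiplicity, or equivalently a direct bilinear estimate avoiding a Cauchy--Schwarz loss. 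The coordination of this hyperbolic counting with the operator-factorization bookkeeping, and obtaining the stated constant, is the most delicate part of the argument.
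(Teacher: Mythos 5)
Your proof follows essentially the same route as the paper's: the same geodesic prefix decomposition of each element of $S_m$ into a piece of length $j=k-\lceil p/2\rceil$ and a piece of length $m-j=n-\lfloor p/2\rfloor$, the same application of the four-point condition to show the ``connecting'' element $w_2$ has length in $\{k-j,\dots,k-j+\delta\}$, and the same multiplicity count for decompositions of a fixed $y_2\in S_n$; the paper merely phrases the final step as a bilinear estimate $\langle\eta,(\lambda\otimes 1)(f)\xi\rangle=\sum_{x_2,s}\langle\eta_{x_2},M_{j,k-j+s}(f)\xi_{x_2,s}\rangle$ plus Cauchy--Schwarz, which is exactly your factorization $M_{m,n}(f)=\sum_s V^*(M_{j,k-j+s}(f)\otimes I)W_s$ in coordinates. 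The one thing to correct is your closing worry: there is no remaining delicate step. Your factorization gives $\|M_{m,n}(f)\|\le\|V\|\cdot\sum_s\|M_{j,k-j+s}(f)\|\cdot\|W_s\|$ with $\|V\|=1$ and $\|W_s\|=\max_{y_2}N_s(y_2)^{1/2}\le(\#B_{1+2\delta})^{1/2}$ (since $W_s^*W_s$ is diagonal with entries the multiplicities $N_s(y_2)$, and two decompositions $wh=y_2=w'h'$ force $d(w,w')\le\lceil p/2\rceil-\lfloor p/2\rfloor+s+\delta\le 1+2\delta$ by the four-point condition applied to $e,w,y_2,w'$). This yields the constant $(\#B_{1+2\delta})^{1/2}$, which is \emph{smaller} than the $\#B_{1+2\delta}$ in the statement, so your bound is strictly stronger and immediately implies the lemma; no ``symmetric version'' of the factorization is needed to recover the first power.
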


We conclude this section by proving our main theorem Theorem \ref{main theorem} using Lemma \ref{H-type inequality} and will prove Lemma \ref{H-type inequality} in the next section.
\begin{proof}[Proof of Theorem \ref{main theorem}]
    By Lemma \ref{H-type inequality}, we have
    \begin{align*}
        \|(\lambda \otimes 1)(f)\|&=\left \|\sum_{r=-k}^k \sum_{m=r}^{\infty} (P_m\otimes 1)(\lambda \otimes 1)(f)(P_{m-r}\otimes 1) \right \|\\
        & \leq \sum_{r=-k}^k \left \|\sum_{m=r}^{\infty} (P_m\otimes 1)(\lambda \otimes 1)(f)(P_{m-r}\otimes 1)\right \|\\
        & = \sum_{r=-k}^k \sup_{m\geq r} \{\|(P_m\otimes 1)(\lambda \otimes 1)(f)(P_{m-r}\otimes 1)\| \}\\
        & \leq \#B_{2\delta}\cdot \sum_{r=-k}^k \sum_{s=0}^{\delta} \|M_{k-\lfloor \frac{k-r}{2}\rfloor ,\lceil \frac{k-r}{2}\rceil+s}(f)\| \\
        & \leq 2\cdot \#B_{2\delta} \sum_{\substack{ i,j\geq 0\\ k\leq i+j\leq k+\delta +1}} \|M_{i,j}(f)\|.
    \end{align*}
\end{proof}

\section{Proof of Lemma \ref{H-type inequality}}

Finally, we prove Lemma \ref{H-type inequality}. Our proof is inspired by \cite{ozawa2005hyperbolic} Section 4. Take any $\xi \in \CC[G]_{n}\otimes \HH$ and $\eta \in \CC[G]_{m}\otimes\HH$.
\begin{equation}
\begin{split}
    \langle \eta, (\lambda\otimes 1)(f)\xi\rangle_{\ell^2(G)\otimes \HH}
    \label{inner product} =\sum_{x} \left \langle \eta(x), \sum_{\substack{y,z\\yz=x}} f(y)\xi(z) \right \rangle_{\HH}
    \end{split}
\end{equation}

Let $p:=n+k-m$. For every $x\in S_m$, we choose a decomposition $x= x_1x_2$ with $x_1\in S_{k-\lfloor \frac{p}{2}\rfloor}$ and $x_2\in S_{n-\lceil \frac{p}{2}\rceil}$. We denote this choice by the map $\varphi: (x,p)\mapsto (x_1,x_2)$. For $y,z$ such that $yz=x$, $\ell(y)\leq k$ and $\ell(z)=n$, denote $ u = y^{-1}x_1 $, then we have $z= ux_2$ as in the following picture.

\begin{center}
        \begin{tikzpicture}[scale=0.9]
            \draw[red][->](0,0)--(4,1);
            \draw[->](4,1)--(6,1.5);
            \draw[red][->](0,0)--(4.3,-0.2);
            \draw[->](4.3,-0.2)--(6,1.5);
            \draw[blue][->](4,1)--(4.3,-0.2);
            \draw(0,0)node[left]{$e$};
            \draw(6,1.5)node[right]{$x,\ell(x)=m$};
            \draw[red](2.15,-0.1)node[below]{$z$,$\ell(z)=n$};
            \draw(5.0,0.5)node[right]{$y$,$\ell(y) \leq k$};
            \draw[red](2,0.5)node[above]{$x_2$};
            \draw(5,1.25)node[above]{$x_1$};
            \draw(4.15,0.4)[blue]node[left]{$u$};
    \end{tikzpicture}
\end{center}

By applying \eqref{square inequality} to $e,z,x,x_2$, we have
\begin{align*}
   \ell(u)+m= d(z,x_2)+d(e,x)&\leq \max\{d(x,x_2)+\ell(z),d(e,x_2)+\ell(y)\} +\delta\\
  &=\max\{\ell(x_1)+\ell(z),\ell(x_2)+\ell(y)\}+\delta\\
    &= k+n -\lfloor \frac{p}{2}\rfloor +\delta.
\end{align*}
Combining this with the triangle inequality for $y=x_1u^{-1}$, we have
\begin{equation}\label{length of u}
    \lceil\frac{p}{2}\rceil =\ell(z)-\ell(x_2) \leq \ell(u) \leq (n+k-m)-\lfloor \frac{p}{2}\rfloor +\delta \leq \lceil\frac{p}{2} \rceil+\delta.
\end{equation}

We can rewrite \eqref{inner product} as
\begin{align}
    &\langle \eta, (\lambda\otimes 1)(f)\xi\rangle_{\ell^2(G)\otimes \HH}\nonumber \\=& \sum_{x_2\in S_{n-\lceil \frac{p}{2}\rceil}} \sum_{ \substack{ x_1\in S_{k-\lfloor \frac{p}{2}\rfloor}\\\varphi(x_1x_2,p)=(x_1,x_2) } }\sum_{u\in G}\langle \eta(x_1x_2),f(x_1u^{-1})\xi(ux_2) \rangle\nonumber
  \\=& \sum_{x_2\in S_{n-\lceil \frac{p}{2}\rceil}} \sum_{ x_1\in S_{k-\lfloor \frac{p}{2}\rfloor} }\sum_{u: \lceil\frac{p}{2} \rceil\leq \ell(u)\leq \lceil \frac{p}{2}\rceil+\delta}\langle \delta_{ \varphi(x_1x_2,p),(x_1,x_2)}\eta(x_1x_2),f(x_1u^{-1})\xi(ux_2) \rangle\nonumber \\
  = &\sum_{x_2\in S_{n-\lceil \frac{p}{2}\rceil}} \sum_{ x_1\in S_{k-\lfloor \frac{p}{2}\rfloor} }\sum_{s=0}^{\delta}\sum_{u\in S_{\lceil \frac{p}{2}\rceil +s}}\langle \delta_{ \varphi(x_1x_2,p),(x_1,x_2)}\eta(x_1x_2),f(x_1u^{-1})\xi(ux_2) \rangle,\label{inner product 2}
\end{align}
where $ \delta_{ \varphi(x_1x_2,p),(x_1,x_2)} $ equals $1$ when $ \varphi(x_1x_2,p)=(x_1,x_2) $ and equals $0$ otherwise.
For each $x_2\in S_{n-\lceil \frac{p}{2}\rceil}$ and $0\leq s\leq \delta$, we define vectors $\eta_{x_2} \in \HH^{S_{k-\lfloor \frac{p}{2}\rfloor}}$ and $\xi_{x_2,s} \in \HH^{S_{\lceil \frac{p}{2}\rceil +s} }$ by
$\eta_{x_2}(x_1):=\delta_{ \varphi(x_1x_2,p),(x_1,x_2)}\eta(x_1x_2)\in \HH$ and $\xi_{x_2,s}(u):=\xi(ux_2)\in \HH$ for $x_1 \in S_{k-\lfloor \frac{p}{2}\rfloor}  $ and $u\in S_{\lceil \frac{p}{2}\rceil +s} $. Then \eqref{inner product 2} is equal to
\begin{align*}
    \sum_{x_2\in S_{n-\lceil \frac{p}{2}\rceil}}\sum_{ s=0}^{\delta} \langle \eta_{x_2}, M_{k-\lfloor \frac{p}{2}\rfloor,\lceil \frac{p}{2}\rceil+s}(f) \xi_{x_2,s}\rangle_{\HH^{S_{k-\lfloor \frac{p}{2}\rfloor}}}.
\end{align*}
Therefore, we have by the triangle inequality and the Cauchy-Schwarz inequality
\begin{equation}\label{vector norm estimate}
\begin{split}
   & |  \langle \eta, (\lambda\otimes 1)(f)\xi\rangle_{\ell^2(G)\otimes \HH} | \\
    \leq &\sum_{x_2\in S_{n-\lceil \frac{p}{2}\rceil}}\sum_{s=0}^{\delta}  \|M_{k-\lfloor \frac{p}{2}\rfloor,\lceil \frac{p}{2}\rceil+s}(f)\| \cdot\| \eta_{x_2}\| \cdot\| \xi_{x_2,s}\| \\
     \leq &\sum_{s=0}^{\delta}  \| M_{k-\lfloor \frac{p}{2}\rfloor,\lceil \frac{p}{2}\rceil+s}(f)\| (\sum_{x_2\in S_{n-\lceil \frac{p}{2}\rceil}} \|\eta_{x_2}\|^2)^{\frac{1}{2}} ( \sum_{x_2\in S_{n-\lceil \frac{p}{2}\rceil}}\|\xi_{x_2,s}\|^2)^{\frac{1}{2}}
    \end{split}
\end{equation}
Now we compare $ (\sum_{x_2\in S_{n-\lceil \frac{p}{2}\rceil}}\|\eta_{x_2}\|^2)^{\frac{1}{2}}$ and $\|\eta\|$.
For each $x \in S_m$, we count how many times $ \eta(x) $ appears in the sum
\begin{equation}\label{eta_v}
\begin{split}
    \sum_{x_2\in S_{n-\lceil \frac{p}{2}\rceil}}\|\eta_{x_2}\|^2=\sum _{x_2 \in S_{n-\lceil\frac{p}{2}\rceil}} \sum_{x_1\in S_{k-\lfloor \frac{p}{2}\rfloor}}\|\delta_{ \varphi(x_1x_2,p),(x_1,x_2)}\eta(x_1x_2)\|^2\\ \leq\sum _{x_2 \in S_{n-\lceil \frac{p}{2}\rceil}} \sum_{x_1\in S_{k-\lfloor \frac{p}{2}\rfloor}}\|\eta(x_1x_2)\|^2
    \end{split}
\end{equation}
If there are $x_1,x_1'\in S_{k-\lfloor \frac{p}{2}\rfloor}$ and $x_2,x_2'\in S_{n-\lceil \frac{p}{2}\rceil}$ such that $x_1x_2 = x =x_1'x_2'$, then by applying \eqref{square inequality} for $e,x_2 ,x ,x_2'$ we have
\begin{align*}
    d(x_2,x_2')\leq (k-\lfloor \frac{p}{2}\rfloor+n-\lceil \frac{p}{2}\rceil-m)+\delta=\delta.
\end{align*}
Therefore, for each $x \in S_m$
\begin{align*}
    \#\{(x_1,x_2) \in S_{k-\lfloor \frac{p}{2}\rfloor}\times S_{n-\lceil \frac{p}{2}\rceil}:x_1x_2 = x\} \leq \#B_{\delta}
\end{align*}
So by \eqref{eta_v}, we have $ \sum_{x_2\in S_{n-\lceil \frac{p}{2}\rceil}}\|\eta_{x_2}\|^2 \leq \#B_{\delta}\cdot\|\eta\|^2$. Similarly, for each fixed $s$, we can bound $ (\sum_{x_2\in S_{n-\lceil \frac{p}{2}\rceil}}\|\xi_{x_2,s}\|^2)^{1/2} $ by $ \|\xi\| $: 

For fixed $z\in S_n$,
if $z=u'x_2'$ for another pair $(x_2',u')\in S_{n-\lceil \frac{p}{2}\rceil}\times S_{\lceil \frac{p}{2}\rceil+s}$, we have similarly $d(x_2,x_2')\leq \delta+s$.
Therefore, $$ \#\{  (x_2,u)\in S_{n-\lceil \frac{p}{2}\rceil}\times S_{\lceil \frac{p}{2}\rceil+s}: ux_2 = z \}\leq \#B_{\delta +s }\leq \#B_{2\delta }.  $$
Hence $$ \sum_{x_2\in S_{n-\lceil \frac{p}{2}\rceil}}\|\xi_{x_2,s}\|^2= \sum_{x_2\in S_{n-\lceil \frac{p}{2}\rceil}}\sum_{u\in S_{\lceil \frac{p}{2}\rceil+s}}\|\xi(ux_2)\|^2\leq \# B_{2\delta} \|\xi\|^2. $$
Applying these to \eqref{vector norm estimate}, we obtain the desired result. \qed

\section{Some Remarks}

\begin{rmk}
\rm{
    One can give a direct proof for the exactness of hyperbolic groups using Theorem \ref{main theorem}. (Of course, the exactness is well known and the proof can be found at Section 5.3 of \cite{brown2008textrm}.) The same proof is used to show the exactness of the reduced free products of exact $C^*$-algebras in Theorem 4.1 of \cite{ricard2006khintchine}. Take any $C^*$-algebra $B$ with a closed ideal $I$. We denote two quotient maps by
    \begin{align*}
        \rho:B\twoheadrightarrow B/I \text{ and } \Tilde{\rho} :C^*_r(G)\otimes_{\text{min}} B \twoheadrightarrow (C^*_r(G)\otimes_{\text{min}} B)/(C^*_r(G)\otimes_{\text{min}} I ).
    \end{align*}
    $ G $ is exact if and only if $\|(\rho\otimes Id)(f)\|_{\text{min}}\geq\|\Tilde{\rho}(f)\|$ for any $B$ and $f \in \mathbb{C}[G]\otimes B$. Note that Theorem \ref{main theorem} states that there is an (possibly non-isometric) embedding $\iota$ into some large matrix algebra $M_N$ (, which is nuclear):
    \begin{align*}
        \iota=\bigoplus_{k\leq i+j \leq k+\delta}M_{i,j}(\cdot):\CC[G]_{\leq k}\hookrightarrow \bigoplus_{k\leq i+j \leq k+\delta}M_{S_i,S_j}\subset M_N
    \end{align*}
    such that $\|\iota\|_{cb}\leq 1$ and $\|\iota^{-1}\|_{cb} \leq (\delta+2)\cdot \#B_{2\delta}\cdot(2k+\delta+3) \leq C_1 (k+1)$ for some constant $C_1$.
    We also denote by $\tilde{\iota}$ the map induced on the quotient
    \begin{align*}
        \tilde{\iota}:(\CC[G]_{\leq k}\otimes B)/(\CC[G]_{\leq k}\otimes I)\to (\iota(\CC[G]_{\leq k})\otimes B)/(\iota(\CC[G]_{\leq k})\otimes I)
    \end{align*}
    which is contractive and $\|\tilde{\iota}^{-1}\|\leq C_1(k+1)$.
    Therefore, by defining the quotient map $\Tilde{\rho}_k:\iota(\CC[G]_{\leq k})\otimes_{\text{min}} B  \twoheadrightarrow (\iota(\CC[G]_{\leq k})\otimes_{\text{min}} B)/ (\iota(\CC[G]_{\leq k})\otimes_{\text{min}} I)$, we have for $f\in \CC[G]_{\leq k} \otimes B$,
    \begin{align*}
            \|\Tilde{\rho}(f)\| =&\|\tilde{\iota}^{-1}\Tilde{\rho}_k(\iota\otimes Id_B)(f)\|\\  
            \leq& C_1(k+1) \|\Tilde{\rho}_k(\iota\otimes Id_B)(f)\|\\
            {=}&   C_1(k+1)\|(\text{Id}\otimes \rho)(\iota \otimes Id_B)(f)\|\leq C_1(k+1)\|(\text{Id}\otimes \rho)(f)\|,
        \end{align*}
        where the equality in the last line follows from the nuclearity of $ M_N $.
    By applying this formula to $(f^*f)^n$, which is supported on $B_{2kn}$, we have
    \begin{align*}
         C_1(2kn+1)\|(Id \otimes \rho)(f)\|^{2n}= C_1(2kn+1)\|(Id\otimes \rho)(f^*f)^{2n}\|\geq\|\Tilde{\rho}((f^*f)^n)\|=\|\Tilde{\rho}(f)\|^{2n}.
    \end{align*}
    By taking the $2n$-th root on both side and let $n \to \infty$, we have $\|(\rho\otimes Id)(f)\|_{\text{min}}\geq\|\Tilde{\rho}(f)\|$.
    }
\end{rmk}

\begin{rmk}
\rm{
    Another natural operator valued analogue of Haagerup inequality can be stated as follows:
     there exist a positive integer $d$ and a constant $C$ such that for any $f \in \CC[G]\otimes B(\HH)$, we have
    \begin{align}\label{complete H-inequality}
        \|(\lambda\otimes 1)(f)\|_{B(\ell^2(G)\otimes \HH)} \leq C\left ( \left \|\sum_x (1+\ell(x))^{2d} f(x)^*f(x)\right\|^{\frac{1}{2}}+ \left\| \sum_x (1+\ell(x))^{2d} f(x)f(x)^*\right\|^{\frac{1}{2}} \right ).
    \end{align}
    This type of operator valued analogue (not exactly the same) has been exploited in \cite{christensen2021c} and proved for all groups with polynomial growth even with actions on a $C^*$-algebra. But one can directly show that \eqref{complete H-inequality} does not hold for the free group $\FF_2=\langle a,b \rangle$. Indeed, define $T_k:=\{g_1,g_2, \cdots , g_t\}\subset S_k$ to be the set of all reduced words with length $k$ starting from $a$ but not ending with $a^{-1}$. We have $\#T_k = t\geq 2^k$ for $k\geq 3$. We define $f \in \CC[G]_{2k}\otimes B(\HH)$ by
    \begin{align*}
        f(x)=\left\{
        \begin{array}{ll}
        E_{i,j} & (\text{if }x=g_ig_j)\\
        0 & (\text{otherwise}),
        \end{array}
        \right.
    \end{align*}
    where $E_{i,j}$ is the matrix unit $|e_i \rangle\langle e_j |$ for an orthonormal basis $\{e_i\}$ of $\HH$. Note that since $M_{k,k}(f)$ is a restriction of $(\lambda \otimes 1)(f)$,
    \begin{align*}
        \|M_{k,k}(f)\|\leq\|(\lambda\otimes 1)(f)\|_{B(\ell^2(G)\otimes \HH)}.
    \end{align*}
    Now by omitting rows and columns with only $0$-entries, we can regard $M:=M_{k,k}(f)$ as an operator from $\HH^{T_k}$ to itself, whose $(g_i,g_j)$-entry is $E_{i,j}$. Then $\|M\|= \#T_k \geq 2^k$ for $k \geq 3$. But
    \begin{align*}
        &\left \|\sum_x (1+\ell(x))^{2d} f(x)^*f(x)\right\|^{\frac{1}{2}}+ \left\| \sum_x (1+\ell(x))^{2d} f(x)f(x)^*\right\|^{\frac{1}{2}}  \\
        =& (1+2k)^{d}\left\{\|\sum_{i=1}^t\sum_{j=1}^t E_{j,j}\|^{\frac{1}{2}}+ \|\sum_{j=1}^t\sum_{i=1}^t E_{i,i}\|^{\frac{1}{2}} \right\}      \\
        =&2(1+2k)^d\sqrt{t}=2(1+2k)^d\sqrt{\#T_k}.
    \end{align*}
    Therefore no constants $d$ and $C$ satisfy \eqref{complete H-inequality} for all $f \in\CC[G]\otimes B(\HH)$.
    }
\end{rmk}

\begin{rmk}
\rm{
    One can also use the same strategy to estimate $ \|(\lambda\otimes 1) f\| $ by matrix of the form $ M_{i,j}(\cdot) $ with exactly $ i+j=k $ just like for the free groups. To see this, one simply need to decompose $y$ instead of $x$ in the proof of Lemma \ref{H-type inequality}. However, it turns out that in order to get the correct bound, one needs to divide the coefficients $f(y)$ of $f$ by the integers $d_{i,j}(y) := \#\{ (y_1,y_2)\in S_{i}\times S_{j}: y=y_1y_2 \}$. Namely, if we define $ \tilde{f}_{i,j}(y) = f(y)/d_{i,j} $, then we can show that
    $$ \|(\lambda\otimes 1) f\|\leq 2\cdot \# B_{1+2\delta}\sum_{i+j=k}\|M_{i,j}(\tilde{f}_{i,j})\|. $$
    However, as we do not know the completely bounded norm of the Schur multiplier given by $ S_{i}\times S_{j}\ni (y_1,y_2)\mapsto \delta_{\ell(y_1y_2),k}/d_{i,j}(y_1y_2) $, it is not clear whether one can actually show that
    $$ \|(\lambda\otimes 1) f\|\leq C\sum_{i+j=k}\|M_{i,j}(f)\|. $$ 
    }
\end{rmk}

\bibliographystyle{plain}
\bibliography{main}

\end{document}